\documentclass[alpha-refs]{wiley-article}

\usepackage{graphicx}
\usepackage[space]{grffile}
\usepackage{latexsym}
\usepackage{textcomp}
\usepackage{longtable}
\usepackage{tabulary}
\usepackage{booktabs,array,multirow}
\usepackage{amsfonts,amsmath,amssymb, enumitem}
\usepackage{natbib}
\usepackage{url}
\usepackage{hyperref}
\hypersetup{
   colorlinks = true,
   citecolor = blue,
   urlcolor = purple,
   linkcolor = red,
   ,pdfborder={0 0 0}
}

\usepackage{etoolbox}
% \makeatletter
% \patchcmd\@combinedblfloats{\box\@outputbox}{\unvbox\@outputbox}{}{%
%   \errmessage{\noexpand\@combinedblfloats could not be patched}%
% }%
% \makeatother
% You can conditionalize code for latexml or normal latex using this.
\newif\iflatexml\latexmlfalse

\AtBeginDocument{\DeclareGraphicsExtensions{.pdf,.PDF,.eps,.EPS,.png,.PNG,.tif,.TIF,.jpg,.JPG,.jpeg,.JPEG}}

\usepackage[utf8]{inputenc}
\usepackage[english]{babel}

% Add any additional LaTeX packages and macros here
\usepackage{siunitx}

%---

\iflatexml
% Add any LateXML specific commands here

%---

\else
% The commands below will only change the exported PDF. Edit or remove as needed

% \usepackage{lineno}
% \linenumbers

% \paperfield{Field of the paper}
% \abbrevs{ABC, a black cat; DEF, doesn't ever fret; GHI, goes home immediately.}
% \corraddress{Institute for Statistics, University of Bremen, D-28344, Bremen, Germany}
% \corremail{mdey@uni-bremen.de}
% \presentadd{Institute for Statistics, University of Bremen, D-28344, Bremen, Germany}
% \fundinginfo{Funder One, Funder One Department, Grant/Award Number: 123456, 123457 and 123458; Funder Two, Funder Two Department, Grant/Award Number: 123459}
% \fi

%---

\papertype{}

\title{Asymptotically Optimal Sequential Multiple Testing Procedures under Dependence}

\author[1]{Monitirtha Dey}
\author[2]{Subir Kumar Bhandari}

\affil[1]{Institute for Statistics, University of Bremen, Bremen, Germany. Email: mdey@uni-bremen.de}
\affil[2]{Interdisciplinary Statistical Research Unit, Indian Statistical Institute, Kolkata, India. Email: subirkumar.bhandari@gmail.com}

\runningauthor{Monitirtha Dey, Subir Kumar Bhandari}

\begin{document}

 \maketitle
% \selectlanguage{english}
\begin{abstract}
This work focuses on the multiple testing problem in a sequential framework where the observations in various streams are dependent, from an optimality viewpoint. We explore this issue by considering the classical means-testing problem in an equicorrelated Gaussian and sequential framework. We focus on sequential test procedures that control the type I and type II familywise error probabilities at pre-specified levels. We establish that our proposed test procedures achieve the optimal expected sample sizes under every possible signal configuration asymptotically, as the two error probabilities vanish at arbitrary rates. Towards this, we elucidate that the ratio of the expected sample size of our proposed rule and that of the classical SPRT (both in the respective context and with the same type I and type II errors) goes to one asymptotically. As a byproduct, our technique provides an alternating proof of existing results on optimal sequential test procedures, extending those to the equicorrelated case. Generalizing this, we show that our proposed procedures, with appropriately adjusted critical values, are asymptotically optimal for controlling any multiple testing error metric lying between multiples of FWER in a certain sense. This class of metrics also includes FDR/FNR and pFDR/pFNR.

\noindent \textbf{Keywords.} ~\emph{Multiple testing under dependence}, \emph{Sequential multiple testing}, \emph{Sequential analysis},
\emph{Asymptotic Optimality}, \emph{Correlated Gaussian.}
\end{abstract}%

\section{Introduction\label{sec:chap6sec1}}
Simultaneous statistical inference has been a cornerstone in the statistics methodology literature, particularly because of its fundamental theory and paramount applications. The mainstream multiple testing literature has traditionally considered two frameworks:
\begin{enumerate}
    \item The sample size is deterministic, i.e., the full data is available while testing. The classical Bonferroni method or the multiple testing procedures (MTPs henceforth) proposed by \cite{BH}, \cite{Holm1979}, \cite{Hochberg1988}, \cite{Hommel1988} - each of them is valid under the fixed sample size paradigm.
    \item The test statistics corresponding to the various tests are independent. For example, the BH procedure was initially shown to provide valid control of FDR for independent test statistics \citep{BH}. The Sidak's procedure holds for independent test statistics and for test statistics with certain parametric distributions.
\end{enumerate}

However, in many modern applications, these assumptions are routinely violated:
\begin{enumerate}
    \item Quite often, the data is streaming or arriving sequentially. In such scenarios, a statistician would need to draw inferences each time new data arrives based on the data available till that time point. In multiple-endpoint clinical trials, patients are collected sequentially or in groups. At each interim stage, the researcher has to decide whether to accept or reject or to collect more samples. Of late, MTPs that can handle these kinds of sequential data have been proposed and studied. 

    When testing multiple hypotheses simultaneously with data collected from a different stream for each hypothesis, one might consider the natural generalization of Wald's sequential framework where all the data streams are terminated simultaneously. This setup finds application in multiple access wireless
network \citep{Rappaport} and multisensor surveillance systems \citep{RFV}. The last decade has witnessed significant progress on this line of work: \cite{DeBaronJSPI, DeBaronSEQ, DeBaron}, \cite{BartroffSong2014, BartroffSong2016}, \cite{SongFellouris2017, SongFellouris2019}, \cite{heBartroff2021}, \cite{roy2023largescale}.
    \item Simultaneous inference problems arising in many disciplines often involve correlated observations \citep{HandbookMCP}. For example, in microRNA expression data, several genes may cluster into groups through their transcription processes and exhibit high correlations. Functional magnetic resonance imaging (fMRI) studies and multistage clinical trials also concern dependent observations. 
    
    Simultaneous testing methods under dependence have been studied by ~\cite{SunCai2009}, ~\cite{efron2007}, ~\cite{LZP}, among others. \cite{Qiu2005} demonstrated that many FDR controlling procedures lose power significantly under dependence. \cite{HuangHsu} mention that stepwise decision rules based on modeling of the dependence structure are generally superior to their counterparts that do not consider the correlation. 
    
    Recently, there has been a flurry of work on multiple hypotheses testing in connection with signal detection \citep{shiterburd2022ksample}. The loss is usually just the total number of mistakes, i.e., the Hamming loss. \cite{Butucea} consider the minimax risk of variable selection under expected Hamming loss in the (correlated) Gaussian mean model in $\mathbb{R}^d$.
\end{enumerate}

However, there is comparatively less literature which studies the multiple testing problem in a sequential framework where the test statistics corresponding to the various streams are dependent. The works by \cite{DeBaronJSPI, DeBaronSEQ, DeBaron} and \cite{BartroffSong2014, BartroffSong2016}  consider arbitrary between-stream correlation. \cite{DeBaronSEQ} derived asymptotically optimal procedures  for controlling type I and type II familywise error under Pitman alternative.

In this paper we consider the classical means-testing problem in a equicorrelated Gaussian and sequential framework. We focus on sequential test procedures that control the type I and type II familywise error probabilities at pre-specified levels. We establish that our proposed test procedures achieve the optimal expected sample sizes under every possible signal configuration asymptotically, as the two error probabilities vanish at arbitrary rates. Table \ref{results} summarizes the relevant asymptotic optimality results in sequential multiple testing in different correlated normal setups.

\begin{table}[ht]
%\begin{tabular}{ |c|c|} 
\centering
\normalsize\begin{tabulary}{1.0\textwidth}{CCCC}
\hline
\textbf{Error rate under consideration} & \textbf{Prior info on number of signals} & \textbf{Existing optimality results under independence} & \textbf{New optimality results under dependence}\\ 
 $(FWER_{I}, FWER_{II})$ & known exactly& Theorem 5.3 of \citep{SongFellouris2017}  & \autoref{1stoptimality}\\
   $(FWER_{I}, FWER_{II})$ & lower and upper bounds known & Theorem 5.5 of \citep{SongFellouris2017} & \autoref{2ndoptimality}\\
  General error rate & known exactly & Theorem 3.1 of \cite{heBartroff2021} & \autoref{theoremongenerror}\\
   General error rate & lower and upper bounds known & Theorem 4.1 of \cite{heBartroff2021} & \autoref{theoremongenerror2} \\\hline
 \end{tabulary}
 \caption{Asymptotic optimality results in sequential multiple testing literature under correlated normal setup (the theorem numbers, unless mentioned otherwise, are as in this paper)}
 {\label{results}}
\end{table}

The equicorrelated setup \citep{Finner2001a, FDR2007, Sarkar2007, Cohen2009, Delattre, Proschan, das_2021, DasBhandari2025, deybhandaristpa, dey2024beyond, deycstm, royspl}  characterizes the exchangeable situation and also covers the problem of comparing a control against several treatments. Simultaneous testing of normal means under equicorrelated frameworks has attracted considerable attention in recent years. \cite{das_2021} showed that the FWER of the Bonferroni procedure is asymptotically bounded above by $\alpha(1-\rho)$ where $\alpha$ is the desired level and $\rho$ is the common correlation. \cite{deyspl} proved that the Bonferroni FWER asymptotically approaches zero. \cite{deystpa} extended this to any step-down procedure and some other test procedures. 

\vspace{3mm}

The \textbf{main contributions of this work} are as follows:

\begin{enumerate}
\item We provide alternative proofs of the results of \cite{SongFellouris2017}, which in turn cover the equicorrelated set-up also. 
\item We generalize the results of \cite{heBartroff2021} to the equicorrelated setup.
\item Our results illuminate that dependence among test-statistics might be a blessing or a curse, subject to the type of dependence or the underlying paradigm (i.e, fixed sample-size or sequential).
\item  We also extend our results to the case when both the signal and the equicorrelation are unknown. We would like to mention that we could not find any existing relevant literature on the asymptotic ASN expression of classical SPRT (i.e, in the traditional one null vs one alternative scenario) when the signal strength is unknown. From this perspective, this is a welcome addition!
\end{enumerate}

This paper is structured as follows. We first formally introduce the framework with relevant notations and mention some existing results on the sequential test rules in the next section. We discuss the asymptotic expansion of expected sample size required by the SPRT in Section \ref{sec:chap6sec3}. We propose feasible sequential rules and establish their asymptotic optimality in Section \ref{sec:chap6sec4}. Section \ref{sec:chap6sec5} extends our asymptotic results to a wide class of error rates. Section \ref{sec:chap6sec6} considers the case when both the signal strength and the equicorrelation are unknown. Section \ref{sec:chap6sec7} presents a simulation study elucidating the performances our proposed tests empirically.
We conclude with a brief discussion in Section \ref{sec:chap6sec8}.

Throughout this work, $M_{K}(\rho)$ stands for the $K \times K$ matrix with $1$ at each diagonal position and $\rho \geq 0$ at each off-diagonal position. Also, as we shall discuss several hypothesis testing problems (HTPs henceforth) in this paper, we shall name each problem for notational convenience. 

\section{Preliminaries\label{sec:chap6sec2}} 
\subsection{The Testing Framework}
We consider $K \geq 2$ data streams:
\begin{align*}
    & X_{11}, X_{12}, \ldots, X_{1n}, \ldots \\
    & X_{21}, X_{22}, \ldots, X_{2n}, \ldots \\
    & \vdots \\
    & X_{K1}, X_{K2}, \ldots, X_{Kn}, \ldots .
\end{align*}
Here $X_{ij}$ denotes the $j$'th observation of the $i$'th data stream, $i \in [K] := \{1, \ldots, K\}$. We assume throughout the paper that the elements of a given stream are independently and identically distributed but not necessarily the streams are independent of each other. For each $i \in [K]$, we consider two simple hypotheses:
$$\textbf{HTP 1.} \hspace{2mm} H_{0i} : X_{ij} \sim N(0,1) \text{\hspace{2mm}for each\hspace{2mm}} j \in \mathbb{N} \quad vs \quad H_{1i} : X_{ij} \sim N(\mu_{i},1)  \text{\hspace{2mm} for each\hspace{2mm}} j \in \mathbb{N}$$
 where $\mu_{i} >0$. 
 
 We assume that for each $j \in \mathbb{N}$, $(X_{1j}, \ldots, X_{Kj})$ follows a multivariate normal distribution with variance covariance matrix $M_{K}(\rho)$ for some $\rho \geq 0$. We consider unit variances since the multiple testing literature often assumes that the variances are known \citep{Abramovich, Bogdan, das_2021, deyspl, Donoho}. We will say that there is “noise” in the $i$th stream if $H_{0i}$ is true and there is “signal” in the $i$th stream otherwise.

Let $\mathcal{S}_{n}^{i}$ denote the $\sigma$-field generated by the first $n$ observations of the
$i$ th data stream, i.e., $\sigma(X_{i1},\ldots,X_{in})$. Let $\mathcal{S}_n$ be the $\sigma$-field generated by the
first $n$ observations in all streams, that is, $\sigma(\mathcal{S}_{n}^{i},  i \in [K]$), where $n \in \mathbb{N}$. The data in all streams are observed sequentially and our goal is to terminate sampling as soon as possible, and upon stopping to solve the $K$ hypothesis testing problems subject to certain error control guarantees. 

We define a \textit{sequential multiple testing procedure} or a \textit{sequential test} as a pair $(T,d)$ where $T$ is an $\{\mathcal{S}_n\}$-stopping
time at which we stop sampling in all streams, and $d$ an $\mathcal{S}_T$-measurable, $K$-dimensional vector of Bernoulli random variables, $(d_1,\ldots,d_K)$, so that we select the alternative hypothesis in the $i$ th stream if and only if $d_i = 1$. Let $\mathcal{D}$ be the subset of streams in which the alternative hypothesis is
selected upon stopping, i.e., $\{i \in [K] : d_i = 1\}$. Suppose $\mathcal{A}$ is the true subset of indices for which the alternative hypothesis is true. For an arbitrary sequential test $(T, d)$ we have
$$
\begin{aligned}
& \{(\mathcal{D} \backslash \mathcal{A}) \neq \emptyset\}=\bigcup_{j \notin \mathcal{A}}\left\{d_{j}=1\right\},  \{(\mathcal{A} \backslash \mathcal{D}) \neq \emptyset\}=\bigcup_{k \in \mathcal{A}}\left\{d_{k}=0\right\} .
\end{aligned}
$$
For any subset $\mathcal{B} \subset[K]$ let $\mathbb{P}_{\mathcal{B}}$ be defined as the distribution of $\left\{\mathbf{X}_{n}, n \in \mathbb{N}\right\}$ when $\mathcal{B}$ is the true subset of signals. Thus, the two types of familywise error rates are given by 
\begin{align*}
    & FWER_{I, \mathcal{A}}(T,d) = \mathbb{P}_{\mathcal{A}}((\mathcal{D} \backslash \mathcal{A}) \neq \emptyset)= \mathbb{P}_{\mathcal{A}}\big[\text{$\textcolor{black}
    {(T,d)}$ makes at least one false rejection}\big],\\
& FWER_{II, \mathcal{A}}(T, d) = \mathbb{P}_{\mathcal{A}}((\mathcal{A} \backslash \mathcal{D}) \neq \emptyset) = \mathbb{P}_{\mathcal{A}}\big[\text{$\textcolor{black}{(T,d)}$ makes at least one false acceptance}\big].
\end{align*}
We write $FWER_{i, \mathcal{A}}(T,d)$ as $FWER_{i}(T,d)$ for $i = I, II$.
We are here concerned with sequential test procedures that control these two error probabilities below pre-specified precision levels $\alpha$ and $\beta$ respectively, where $\alpha, \beta \in(0,1)$, for any possible subset of signals. In order to be able to incorporate prior information as \cite{SongFellouris2017}, we assume that the true subset of signals is known to belong to a class $\mathcal{P}$ of subsets of $[K]$. We consider the class
$$\Delta_{\alpha, \beta}^{FWER}(\mathcal{P}):=\left\{(T, d): FWER_{I}(T,d) \leq \alpha \text { and } FWER_{II}(T,d) \leq \beta \text { for every } \mathcal{A} \in \mathcal{P}\right\}.$$
In particular, two general classes $\mathcal{P}$ will be considered. In the first case, it is known beforehand that there are exactly $m$ signals in the $K$ streams, where $1 \leq m \leq K-1$. In the second case, although the exact number of signals might not be known, strict lower and upper bounds for the same are available. These two cases correspond to the classes 
$$\mathcal{P}_{m}:=\{\mathcal{A} \subset[K]:|\mathcal{A}|=m, 0<m<K\}, \quad \mathcal{P}_{\ell, u}:=\{\mathcal{A} \subset[K]: 0 < \ell <|\mathcal{A}| < u <K\},$$
\noindent respectively. 

\textcolor{black}{
\begin{remark}
   In the previous works on sequential multiple testing under independence, the mean under the alternative hypothesis is not necessarily positive, neither the mean under the null  is necessarily zero. For example, in the setting of \cite{SongFellouris2017}, mean is $\theta_{0i}$ under the null and is $\theta_{1i}$ under the alternative, with $\theta_{1i} >\theta_{0i}$. Note that testing $\theta_{0i}$ vs $\theta_{1i}$ is equivalent to testing $0$ vs $\theta_{1i}-\theta_{0i}(>0)$. Therefore, in HTP 1, we are assuming zero mean under the null and $\mu_{i}>0$ under the alternative.
\end{remark}}
\subsection{Asymptotic Optimality for controlling FWER}
We are interested in finding sequential MTPs belonging to  $\Delta_{\alpha, \beta}^{FWER}(\mathcal{P}_m)$ or $\Delta_{\alpha, \beta}^{FWER}(\mathcal{P}_{\ell, u})$ which are optimal in the natural sense of Wald's sequential framework, i.e., which achieve the minimum possible expected sample size, under each possible signal configuration, for small error probabilities. 

\begin{definition}\citep{SongFellouris2017}
Let $\mathcal{P}$ be a given class of subsets and let $\left(T^{*}, d^{*}\right)$ be a sequential test that can \textcolor{black}{be} designed to belong to $\Delta_{\alpha, \beta}^{FWER}(\mathcal{P})$ for any given $\alpha, \beta \in(0,1)$. $\left(T^{*}, d^{*}\right)$ is called \textit{asymptotically optimal in the class $\mathcal{P}$ for controlling FWER}, if for every $\mathcal{A} \in \mathcal{P}$ we have,
$$\lim_{\alpha, \beta \to 0} \dfrac{\mathbb{E}_{\mathcal{A}}\left[T^{*}\right]}{\displaystyle \inf_{(T, d) \in \Delta_{\alpha, \beta}^{FWER}(\mathcal{P})} \mathbb{E}_{\mathcal{A}}[T]} = 1$$
where $\mathbb{E}_{\mathcal{A}}$ denotes the expectation under $\mathbb{P}_{\mathcal{A}}$. 
\end{definition}

We shall often write $x \sim y$ to mean $x / y \rightarrow 1$. We denote cardinality by $|\cdot|$ and, for any two real numbers $x, y$ we set $x \wedge y=\min \{x, y\}$ and $x \vee y=\max \{x, y\}$. 
This paper aims to propose feasible sequential test procedures that are asymptotically optimal in the classes $\mathcal{P}_{m}$ and $\mathcal{P}_{\ell, u}$. 

\cite{SongFellouris2017} consider independent data streams and propose sequential test procedures (which they referred to as the “gap” and “gap-intersection” procedures) that are asymptotically optimal in the classes $\mathcal{P}_{m}$ and $\mathcal{P}_{\ell, u}$ under independence.

\cite{heBartroff2021} showed that the “gap” and “gap-intersection” procedures proposed by \cite{SongFellouris2017}, with appropriately adjusted critical values, are asymptotically optimal for controlling any multiple testing error metric that is bounded between multiples of FWER in a certain sense. This class of metrics includes FDR/FNR but also pFDR/pFNR, the per-comparison and per-family error rates, and the false positive rate. We shall show in section 5 that our proposed procedures also are asymptotically optimal for controlling any multiple testing error metric that is bounded between multiples of FWER in a certain sense.

\section{Asymptotic Expansion of Expected Sample Size of One-sided SPRT\label{sec:chap6sec3}}

This work aims to propose feasible sequential test procedures that require minimum expected sample size in the classes $\mathcal{P}_{m}$ and $\mathcal{P}_{\ell, u}$ asymptotically as $\alpha, \beta \longrightarrow 0$. We shall establish this asymptotic optimality by comparing the ratios of the expected sample sizes of our proposed procedure and the one-sided sequential probability ratio test (SPRT) ((both in the respective context and with the same type I and type II errors). Therefore, we at first focus on the expected sample size (or average sample number (ASN)) of SPRT. 

\textcolor{black}{Consider the following hypothesis testing problem:
$$\textbf{HTP 2.} \hspace{2mm} H_0 : U_i \sim N(\theta_0, \sigma^2) \quad vs \quad H_1: U_i \sim N(\theta_1, \sigma^2), \quad \theta_0 <\theta_1.$$
Assume that if $N(\theta_1, \sigma^2)$ is the true density, each observation costs a fixed amount. Here we would wish to stop sampling as soon as possible and reject the null $H_0$.}

\textcolor{black}{ \cite{Siegmund} gives the following example on one-sided testing. Suppose a new drug is being marketed under the null hypothesis that its side effects are insignificant. However, doctors prescribing the drug must record and report the side effects. As long as the hypothesis $H_0 : U_i \sim N(\theta_0, \sigma^2)$ remains reasonable, no regulation is necessary. If it even appears that the level of side effects is unacceptably high ($H_1 : U_i \sim N(\theta_1, \sigma^2)$), the regulatory body will announce this and and withdraw the drug.}

\textcolor{black}{A \textit{test} of $H_0: U_i \sim N(\theta_0, \sigma^2)$ in the sense described above is provided by a stopping time $T$. If $T<\infty$, $H_0$ is rejected. We seek a stopping time for which $\mathbb{P}_{H_0}\{T<\infty\} \leq \alpha$ for some pre-specified small $\alpha$ and for which $\mathbb{E}_{H_1}(T)$ is minimum. At the $n$-th stage, the one-sided SPRT \citep{LaiSPRT, Siegmund} for this problem is as follows:}

\textcolor{black}{(a) reject $H_0$ if
$$
\sum_{i=1}^n U_i-\frac{n\left(\theta_1+\theta_0\right)}{2} \geqslant |\log \alpha| \cdot \frac{\sigma^2}{\theta_1-\theta_0} .
$$}

\textcolor{black}{(b) continue sampling otherwise.}

\textcolor{black}{We denote the above one-sided SPRT as $SPRT_{\alpha}(\theta_{0}, \theta_{1}, \sigma^2)$. We denote its stopping time as $T_{SPRT_{\alpha}(\theta_{0}, \theta_{1}, \sigma^2)}$.}

\textcolor{black}{The optimality of SPRT among all fixed-sample-size or sequential tests is well-known \citep{CRRao, Nitis} . We shall utilize the following result on the optimality of the one-sided SPRT:}
\textcolor{black}{
\begin{theorem}(\cite{Siegmund}, page 20) \label{SPRToptimality} 
Consider the class of all fixed-sample-size or sequential tests for the problem HTP 2, having the type I error probability not more than $\alpha<1$.
\begin{enumerate}
  \item For any stopping time $T$ in this class, 
$$
\mathbb{E}_{H_1}(T) \geq |\log \alpha| \cdot \frac{2 \sigma^2}{(\theta_1-\theta_0)^2}.
$$
  \item The approximate expression for the ASN of $SPRT_{\alpha}(\theta_0, \theta_1, \sigma^2)$ when $\alpha \to 0$ is
    $$\mathbb{E}_{H_{1}}[T_{SPRT_{\alpha}(\theta_{0}, \theta_{1}, \sigma^2)}] \sim |\log \alpha| \cdot \frac{2 \sigma^2}{(\theta_1-\theta_0)^2}.$$ Here $x \sim y$ implies $x/y \to 1$ as $\alpha \to 0$. 
\end{enumerate} 
In other words, the one-sided SPRT with type I error probability $\alpha$ minimizes $\mathbb{E}_{H_1}(T)$ in this class when $\alpha \to 0$.
\end{theorem}}
The following result will be crucial in proving our asymptotic optimality results. 
\begin{theorem}\label{sprtoptimalitynew}(see Theorem 2 of \cite{Mei})
    Let $L >2$ and $[L]:=\{1, \ldots, L\}$. Suppose that for each $i \in [L]$, we have a sequence of i.i.d. random variables $\left\{\epsilon_{in}, n \in \mathbb{N}\right\}$ having means $\mathbb{E}(\epsilon_{in}) = \theta_{i} >0$ and finite variances. Let
$$
M_{i,n}=\sum_{j=1}^n \epsilon_{ij}, \quad j \in \mathbb{N}. 
$$
Let $\left(a_1, \ldots, a_L\right)  \in \mathbb{R}^{L}$ be arbitrary. Consider the stopping time
$\mathcal{W}=\inf_{n \geq 1} \left\{ M_{i,n} \geq a_i \quad \text{for all} \quad i\in [L]\right\} .$
Then, as $a_1, \ldots, a_L \rightarrow \infty$,
$$
\mathbb{E}[\mathcal{W}] \leq \max _{i \in[L]}\left(\frac{a_i}{\theta_{i}}\right)+O\left(L \sqrt{\max _{i \in[L]}\left\{a_i\right\}}\right).
$$
\end{theorem}

\section{Main Results for FWER Controlling Tests\label{sec:chap6sec4}}
\subsection{A distributionally equivalent representation of the vector observations}

Let $\mathbf{X}_n = (X_{1n}, \ldots, X_{kn})$ denote the $K$-dimensional vector storing all the observations collected at time $n$. Suppose $\boldsymbol{\mu} = (\mu_1, \ldots, \mu_K)'$ where $\mu_{i}$ is 0 for $i \notin \mathcal{A}$ and strictly positive otherwise. Then, for each $n \geq 1$, we have
$\mathbf{X}_n \sim MVN_{K}(\mathbf{\boldsymbol{\mu}}, M_K(\rho)).$
This implies,  
\begin{equation*}
    \mathbf{X}_n \overset{d}{=} \mathbf{Z}_n + V_n \cdot \mathbf{1}_K
\end{equation*}
where $\mathbf{Z}_n = (Z_{1n}, \ldots, Z_{kn}) \sim MVN_{K}(\boldsymbol{\mu}, (1-\rho)\cdot I_{K})$ and $V_n \sim N(0,\rho)$ are independent. Here $\mathbf{1}_K$ denotes the $K$ dimensional vector of all ones. Thus, for each $i \in [K]$ and for each $j \geq 1$, 
$X_{ij} \overset{d}{=} Z_{ij} + V_j.$
This gives, 
\begin{align*}
      \sum_{j=1}^n X_{ij} \overset{d}{=} \sum_{j=1}^n Z_{ij} + \sum_{j=1}^n V_j 
    \implies & S_{i,n} \overset{d}{=} R_{i,n} + \sum_{j=1}^n V_j \quad \text{(say)}\\
    \implies & S_{i,n} -S_{i^{\prime},n} \overset{d}{=} R_{i,n} - R_{i^{\prime},n}.
\end{align*}
We also note that
$$S_{(m),n}-S_{(m+1),n} \overset{d}{=} R_{(m),n}-R_{(m+1),n}$$
where $S_{(1),n} \geq \cdots \geq S_{(K),n}$ and $R_{(1),n}\geq \cdots \geq R_{(K),n}$. This implies, although we can not directly observe $Z_{ij}$'s or $R_{i,n}$'s, we can observe the quantities $R_{i,n}-R_{i^{\prime},n}$ and $R_{(m),n}-R_{(m+1),n}$. The distributions of $V_{j}$ or $\sum_{j} V_j$ do not involve $\mu$. Therefore, inference on $\mu$ based on $R_{i,n}$ would convey the same amount of information as the inference based on $S_{i,n}$ would have. 

It is worth noting that the equicorrelated model also arises naturally in the common (but, important) treatments versus control setting.  Suppose the control $Y_0$ has a normal distribution with variance $\rho$  and each treatment $Y_i$ has a normal distribution with variance  $1-\rho$ and all these random variables are independent. Each test will be based on  $X_i = Y_i -Y_0$.  Therefore, the $X$’s have a multivariate Normal distribution with covariance matrix  $M_{K}(\rho)$. Thus $\rho$ and $1-\rho$ can be thought of as the accuracy of the control observations relative to the treatment observations. This might give an added intuition to the role of $\rho$. Also note that larger values of $\rho$ lead to larger correlations among the $X_{i}$'s. So one might expect that
it should require fewer observations on average to detect the signals for larger values of $\rho$. We shall observe in the subsequent sections that this is indeed the case.

 \subsection{Proposed procedure for known number of signals}
 \textcolor{black}{
\noindent When it is known beforehand that there are exactly $m$ signals, we have
$$\mu_{(1)} \geq \mu_{(2)}\geq \cdots \geq \mu_{(m)} > \mu_{(m+1)}=\cdots=\mu_{(K)}=0.$$}
Mimicking the gap rule proposed by \cite{SongFellouris2017}, we propose the following stopping time:
 $$\begin{aligned}
T_{Gap}^{\star}(m,c) := & \inf \left\{n \geq 1:  S_{(m),n} - S_{(m+1),n} \geq \frac{1-\rho}{\mu_{(m)}} \cdot c\right\}
= \inf \left\{n \geq 1:  R_{(m),n} - R
_{(m+1),n} \geq \frac{1-\rho}{\mu_{(m)}} \cdot c\right\}
\end{aligned}$$
where $c:=|\log (\alpha \wedge \beta)|+\log (m(K-m))$.
Let $G= (1-\rho)\cdot c/\mu_{(m)}$ be the r.h.s of the above inequality. 

\textcolor{black}{The problem is to find the indices having strictly positive $\mu$'s. We denote this classification problem as $\mathcal{C}$. Let us consider two other classification problems:}
\textcolor{black}{
\begin{enumerate}
    \item \textbf{Problem $\mathcal{C}_{1}$:} Consider $K$ data streams where it is known beforehand that there are exactly $m$ signals. Also, suppose that all the strictly positive $\mu$'s are equal to $\mu_{(m)}$ ($\mu_{(m)}$ is as defined in problem $\mathcal{C}$). So, essentially $m$ many $\mu$'s are equal to $\mu_{(m)}$ while the rest are zero. 
    \item \textbf{Problem $\mathcal{C}_{2}$:} Consider two independent data streams: for each $j \in \mathbb{N}$,
    $$Z_{i_{1}j} \sim N(\mu_1, 1-\rho) \quad \text{and} \quad Z_{i_{2}j} \sim N(\mu_2, 1-\rho).$$
    Suppose it is known beforehand that among these two streams there is exactly $1$ signal. More specifically, assume that exactly one of $\mu_1$ and $\mu_2$ equals $\mu_{(m)}$ ($\mu_{(m)}$ is as defined in problem $\mathcal{C}$) while the other one is zero. Suppose $Q=Z_{i_1}-Z_{i_2}$. This classification problem is equivalent to testing the following problem:
    $$H_0 : Q \sim N(-\mu_{(m)}, 2(1-\rho)) \quad vs \quad H_1: Q \sim N(\mu_{(m)}, 2(1-\rho)), \quad \mu_{m} >0.$$
\end{enumerate}}
\noindent We focus on problem $\mathcal{C}_{2}$ at first. The optimal stopping rule for problem $\mathcal{C}_{2}$ is given by $SPRT_{\alpha \wedge \beta}\bigg(-\mu_{(m)}, \mu_{(m)}, 2(1-\rho)\bigg)$. The expansion for its ASN, using \autoref{SPRToptimality}, is given by 
\begin{equation}\mathbb{E}\bigg[T_{SPRT_{\alpha \wedge \beta}\big(-\mu_{(m)}, \mu_{(m)}, 2(1-\rho)\big)}\bigg] \sim \frac{1-\rho}{\mu_{(m)}^2} \cdot  |\log (\alpha \wedge \beta)|. \label{problemc2} \end{equation}
\textcolor{black}{
We focus on problem $\mathcal{C}_{1}$ now. Suppose $T(\mathcal{C}_{1})$ denotes the stopping time for the gap rule for $\mathcal{C}_{1}$. We note that
$$R_{(m),n}-R_{(m+1),n} \geq \min_{i \in \mathcal{A}} R_{i,n}-\max_{j \in \mathcal{A}^c} R_{j,n}.$$
This is because, when $\displaystyle \min_{i \in \mathcal{A}} R_{i,n}-\max_{j \in \mathcal{A}^c} R_{j,n} \geq 0$, $R_{(m),n}-R_{(m+1),n} = \displaystyle \min_{i \in \mathcal{A}} R_{i,n}-\max_{j \in \mathcal{A}^c} R_{j,n}$. Otherwise, $R_{(m),n}-R_{(m+1),n}\geq 0 > \displaystyle \min_{i \in \mathcal{A}} R_{i,n}-\max_{j \in \mathcal{A}^c} R_{j,n}$. So, 
\begin{align*}
    T(\mathcal{C}_{1}) = \inf_{n} \bigg\{  R_{(m),n}-R_{(m+1),n}  \geq G\bigg\}
    & \leq  \inf_{n} \bigg\{ \min_{i \in \mathcal{A}} R_{i,n}-\max_{j \in \mathcal{A}^c} R_{j,n} \geq G\bigg\} \\
    & =  \inf_{n} \bigg\{ \min_{i \in \mathcal{A}, j \in \mathcal{A}^{c}} \big(R_{i,n}-R_{j,n} \big)\geq G\bigg\}.
\end{align*}
\autoref{sprtoptimalitynew} gives, as $\alpha, \beta \to 0$,
\begin{equation}\label{problemc1}
    \frac{\mathbb{E}[T(\mathcal{C}_1)]}{|\log (\alpha \wedge \beta)|} \leq \frac{1-\rho}{\mu_{(m)}^2}.
\end{equation}}
We focus on problem $\mathcal{C}$ now. Evidently, the average sample number required for problem $\mathcal{C}$ lies between the average sample numbers required for problem $\mathcal{C}_1$ and $\mathcal{C}_2$. Mathematically, 
\begin{align*}
    & ASN(\mathcal{C}_2) \leq ASN(\mathcal{C}) \leq ASN(\mathcal{C}_1) \\
    \text{i.e,} \hspace{2mm} & \mathbb{E}\bigg[T_{SPRT_{\alpha \wedge \beta}\big(-\mu_{(m)}, \mu_{(m)}, 2(1-\rho)\big)}\bigg] \leq \mathbb{E}_{\mathcal{A}}\left[T_{Gap}^{\star}\right] \leq \mathbb{E}[T(\mathcal{C}_{1})].
\end{align*}
The first inequality above is valid because $\mathcal{C}_{2}$ requires only one decision. We justify the first inequality along the following lines. If the first inequality does not hold, then in order to solve $\mathcal{C}_2$, one can think of additional $K-2$ hypothetical streams in which exactly $m-1$ streams have positive means, and can have a stopping rule with less ASN. But then this would imply that $SPRT_{\alpha \wedge \beta}\big(-\mu_{(m)}, \mu_{(m)}, 2(1-\rho)\big)$ is not the optimal stopping rule for problem $\mathcal{C}_2$. Hence, contradiction! So, the first inequality is valid. 

The second inequality holds because in $\mathcal{C}_1$ the positive $\mu$'s are much closer to $0$ than in $\mathcal{C}$. Equations \eqref{problemc2} and \eqref{problemc1} give, as $\alpha, \beta \to 0$,
\begin{equation}\label{problemc}\mathbb{E}_{\mathcal{A}}\left[T_{Gap}^{\star}\right] \sim \frac{1-\rho}{\mu_{(m)}^2}\cdot|\log (\alpha \wedge \beta)|.\end{equation}
We now establish that our proposed gap rule has the asymptotically optimal expected sample size. Towards this, we note that the minimal expected sample size to find $m$ signals among $K$ streams is not less than the minimal expected sample size to find $1$ signal among $2$ streams. 

\noindent Mathematically, as $\alpha, \beta \to 0$,
\begin{align*}
    \inf _{(T, d) \in \Delta_{\alpha, \beta}^{FWER}\left(\mathcal{P}_{m}\right)} \mathbb{E}_{\mathcal{A}}[T] \geq  \inf _{(T, d) \in \Delta_{\alpha, \beta}^{FWER}\left(\mathcal{P}_{1}\right)} \mathbb{E}_{\mathcal{A}}[T] 
    = & \mathbb{E}\bigg[T_{SPRT_{\alpha \wedge \beta}\big(-\mu_{(m)}, \mu_{(m)}, 2(1-\rho)\big)}\bigg] \quad \text{(since SPRT is the optimal test here)}\\
    \sim & \frac{1-\rho}{\mu_{(m)}^2} \cdot  |\log (\alpha \wedge \beta)| \quad \text{(from equation \eqref{problemc2})} \\
\sim & \mathbb{E}_{\mathcal{A}}\left[T_{Gap}^{\star}\right] \quad \text{(from equation \eqref{problemc})}.
\end{align*}
\noindent Thus, we obtain the following:
\begin{theorem}\label{1stoptimality}
    For every $\mathcal{A} \in \mathcal{P}_{m}$, we have as $\alpha, \beta \rightarrow 0$
$$\mathbb{E}_{\mathcal{A}}\left[T_{Gap}^{\star}\right] \sim \frac{1-\rho}{\mu_{(m)}^2}\cdot|\log (\alpha \wedge \beta)| \sim \inf _{(T, d) \in \Delta_{\alpha, \beta}^{FWER}\left(\mathcal{P}_{m}\right)} \mathbb{E}_{\mathcal{A}}[T] .$$
\end{theorem}
\subsection{Proposed procedure when upper and lower bounds on the number of signals are available}
In this as well as the next section, we assume that all the alternative $\mu_{i}$'s are equal to $\mu$. Here we consider the setup in which it is known beforehand that there are at least $\ell$ and at most $u$ signals (both exclusive) for some $0 < l \leq u < K$. This corresponds to considering the class $\mathcal{P}_{\ell, u}$. We propose the gap rule
$$\begin{aligned}
T_{Gap}^{\star \star}(l,u,e_{\textcolor{black}{n}}) & :=\inf \left\{n \geq 1: \max_{\ell < i < u} \left(S_{(i),n} - S_{(i+1),n}\right) \geq e_{\textcolor{black}{n}}\right\}
% \mathcal{D}_{Gap} & :=\left\{i_{1}\left(T_{G}\right), \ldots, i_{m}\left(T_{G}\right)\right\} .
\end{aligned}$$
where $e_{\textcolor{black}{n}}$ is a suitably defined \textcolor{black}{time-varying threshold}. Let $p$ be the index where the above maximum occurs at time $T_{Gap}^{\star \star}(l,u,e_{\textcolor{black}{n}})$. The set of rejected hypotheses is given by
 $d_{Gap}^{\star \star} := \left\{i_{1}(T_{Gap}^{\star \star}),\ldots,i_{p}(T_{Gap}^{\star \star})\right\}$, where $S_{(t),n} = S_{i_t,n}$. Now, 
\begin{align*}
    FWER_{I}(T_{Gap}^{\star \star}, d_{Gap}^{\star \star})
    = \mathbb{P}_{\mathcal{A}}\left(\bigcup_{i \notin \mathcal{A}} \{ d_{Gap,i}^{\star \star} = 1\}\right)
    \leq & \sum_{i \notin \mathcal{A}} \mathbb{P}_{\mathcal{A}} \left(d_{Gap,i}^{\star \star} = 1\right) \\
    \leq & (K-l) \mathbb{P}_{\mathcal{A}} \left(d_{Gap,i}^{\star \star} = 1\right) \quad \text{(for any $i \notin \mathcal{A}$)} \\
    \leq & 2(K-l) \mathbb{P}_{\mathcal{A}}\left(\bigcup_{j \notin \mathcal{A}, j\neq i} \{S_{i,n} - S_{j,n} \geq e_{\textcolor{black}{n}}\}\right) \\
    \leq & 2(K-l)(K-l-1) \mathbb{P}_{\mathcal{A}}(S_{i,n} - S_{j,n} \geq e_{\textcolor{black}{n}})  \quad \text{(for $j \notin \mathcal{A}, j \neq i$)} \\
    = & 2(K-l)(K-l-1) \Phi \left( \frac{-e_{\textcolor{black}{n}}}{\sqrt{2(1-\rho)n}}\right) \hspace{2mm}(\Phi \hspace{2mm} \text{is the cdf of} \hspace{2mm}N(0,1))\\
    = & 2(K-l)(K-l-1) \mathbb{P}_{\mathcal{A}}\left(S_{i,n} \geq \frac{e_{\textcolor{black}{n}}}{\sqrt{2}}\right).
\end{align*}
Hence, the chosen $e_{\textcolor{black}{n}}$ should satisfy
\begin{align*}
    & 2(K-l)(K-l-1) \mathbb{P}_{\mathcal{A}}\left(S_{i,n} \geq \frac{e_{\textcolor{black}{n}}}{\sqrt{2}}\right)  \leq \alpha \\
    \implies & \mathbb{P}_{\mathcal{A}}\left(S_i \geq \frac{e_{\textcolor{black}{n}}}{\sqrt{2}}\right) \leq \frac{\alpha}{2(K-l)(K-l-1)}.
\end{align*}
Thus, we can choose $e_{\textcolor{black}{n}}$ to be
$$\frac{e_{\textcolor{black}{n}}}{\sqrt{2}} = \frac{1-\rho}{\mu}\cdot |\log \left(\frac{\alpha}{2(K-l)(K-l-1)}\right) | + n\mu/2.$$
This is because this is the cutoff used by $SPRT_{(\frac{\alpha}{2(K-l)(K-l-1)}, 0)}(0,\mu, 1-\rho)$. 

\noindent One can also show the following exactly as in the same way as above:
\begin{align*}
     FWER_{II}(T_{Gap}^{\star \star}, d_{Gap}^{\star \star}) & \leq 2u(u-1) \mathbb{P}_{\mathcal{A}}(S_i - S_j \geq e_{\textcolor{black}{n}})  \quad \text{(for $i, j \in \mathcal{A}, j\neq i$)} \\
    & = 2u(u-1) \mathbb{P}_{\mathcal{A}}\left(S_i \geq \frac{e_{\textcolor{black}{n}}}{\sqrt{2}}\right). 
\end{align*}
Hence, the chosen $e_{\textcolor{black}{n}}$ should satisfy
\begin{align*}
& \mathbb{P}_{\mathcal{A}}\left(S_i \geq \frac{e_{\textcolor{black}{n}}}{\sqrt{2}}\right) \leq \frac{\beta}{2u(u-1)}.
\end{align*}
This leads to the choice
$$\frac{e_{\textcolor{black}{n}}}{\sqrt{2}} = \frac{1-\rho}{\mu}\cdot |\log \left(\frac{\beta}{2u(u-1)}\right) | + n\mu/2.$$
Therefore, we choose the following $e_{\textcolor{black}{n}}$:
$$e_{\textcolor{black}{n}} = \frac{1-\rho}{\mu}\cdot \max \left \{ |\log \left(\frac{\alpha}{2(K-l)(K-l-1)}\right) |, |\log \left(\frac{\beta}{2u(u-1)}\right) |\right\} + n\mu/2.$$

\noindent The previous derivations result in the following:
\begin{align*}
    & FWER_{I}(T_{Gap}^{\star \star}, d_{Gap}^{\star \star}) \leq 2(K-l)(K-l-1)PICS \Big[SPRT_{\left(\frac{\alpha}{2(K-l)(K-l-1)},0\right)}(0,\mu, 1-\rho) \Big], \\
    & FWER_{II}(T_{Gap}^{\star \star}, d_{Gap}^{\star \star}) \leq 2u(u-1)PICS \Big[ SPRT_{\left(\frac{\beta}{2u(u-1)},0\right)}(0,\mu, 1-\rho) \Big]. 
\end{align*} 
However, those derivations also give the following inequalities:
\begin{align*}
    & FWER_{I}(T_{Gap}^{\star \star}, d_{Gap}^{\star \star}) \geq PICS \Big[SPRT_{\left(\frac{\alpha}{2(K-l)(K-l-1)},0\right)}(0,\mu, 1-\rho) \Big], \\
    & FWER_{II}(T_{Gap}^{\star \star}, d_{Gap}^{\star \star}) \geq PICS \Big[ SPRT_{\left(\frac{\beta}{2u(u-1)},0\right)}(0,\mu, 1-\rho) \Big]. 
\end{align*} 
Combining all these four inequalities, we obtain, 
$$\mathbb{E}_{\mathcal{A}}(T_{Gap}^{\star \star}) = \max \left \{ \mathbb{E}\left(T_{SPRT_{\left(\frac{\alpha}{2(K-l)(K-l-1)},0\right)}(0,\mu, 1-\rho)} \right) , \mathbb{E}\left(T_{SPRT_{\left(\frac{\beta}{2u(u-1)},0\right)}(0,\mu, 1-\rho)} \right) \right \}.$$
This gives, as $\alpha, \beta \to 0$, 
$$
\mathbb{E}_{\mathcal{A}}\left[T_{Gap}^{\star \star}\right] \sim \frac{2(1-\rho)}{\mu^2}\cdot|\log (\alpha \wedge \beta)|.
$$
\textcolor{black}{
We now establish that our proposed gap rule has the asymptotically optimal expected sample size. Towards this, we note that the minimal expected sample size to find  signals among $K$ streams is not less than the minimal expected sample size to decide whether a particular stream has signal. Mathematically, as $\alpha, \beta \to 0$,
\begin{align*}
    \inf _{(T, d) \in \Delta_{\alpha, \beta}^{FWER}\left(\mathcal{P}_{l,u}\right)} \mathbb{E}_{\mathcal{A}}[T] \geq & \inf _{(T, d) \in \Delta_{\alpha, \beta}^{FWER}\left(\mathcal{P}_{0,1}\right)} \mathbb{E}_{\mathcal{A}}[T] \\
    = & \mathbb{E}\bigg[T_{SPRT_{\alpha \wedge \beta}\big(0, \mu, (1-\rho)\big)}\bigg] \quad \text{(since SPRT is the optimal test here)}\\
    \sim & \frac{2(1-\rho)}{\mu^2} \cdot  |\log (\alpha \wedge \beta)| \quad \text{(from equation \eqref{problemc2})} \\
\sim & \mathbb{E}_{\mathcal{A}}\left[T_{Gap}^{\star \star}\right] \quad \text{(from the above equation)}.
\end{align*}}
\noindent Thus, we obtain the following:
\begin{theorem}\label{2ndoptimality}
    For every $\mathcal{A} \in \mathcal{P}_{\ell, u}$, we have as $\alpha, \beta \rightarrow 0$,
$$
\mathbb{E}_{\mathcal{A}}\left[T_{Gap}^{\star \star}\right] \sim \frac{2(1-\rho)}{\mu^2}\cdot|\log (\alpha \wedge \beta)| \sim \inf _{(T, d) \in \Delta_{\alpha, \beta}^{FWER}\left(\mathcal{P}_{\ell, u}\right)} \mathbb{E}_{\mathcal{A}}[T] .
$$
\end{theorem}

\section{General Error Rate controlling Procedures\label{sec:chap6sec5}}
As mentioned earlier, \cite{heBartroff2021} studied asymptotic optimality of general multiple testing error metrics under the independent streams framework. In this section, we focus on deriving similar results under dependence. Denote the type 1 and 2 versions of a generic multiple testing error metric by $\mathrm{MTE}=\left(\mathrm{MTE}_1, \mathrm{MTE}_2\right)$, which is any pair of functions mapping MTPs into $[0,1]$. We mention some widely used multiple testing error metrics below.

For any MTP under consideration, let $V, W$ and $R$ respectively denote the number of true null hypotheses rejected, the number of false null hypotheses accepted, and the total number of null hypotheses rejected. The type I and type II familywise error probabilities are given by
$$
FWER_{1, \mathcal{A}}=\mathbb{P}_{\mathcal{A}}(V \geq 1), \quad FWER_{2, \mathcal{A}}=\mathbb{P}_{\mathcal{A}}(W \geq 1).
$$
In this and other MTEs, as in \cite{heBartroff2021}, we will include the procedure being evaluated as
an argument (e.g., $FWER_{1,\mathcal{A}}(T , d)$) when needed. However, we shall omit it when it is clear from the context or when a statement holds
for any procedure. Similarly, we shall omit the subscript $\mathcal{A}$ in expressions that are valid for arbitrary signal sets.

The false discovery and non-discovery rates (FDR, FNR) are given by
$$
FDR=\mathbb{E}\left(\frac{V}{R \vee 1}\right), \quad FNR=\mathbb{E}\left(\frac{W}{(K-R) \vee 1}\right).
$$

\cite{Storey2003} considers the positive false discovery rate and positive false non-discovery rate:
$$pFDR=E\left(\frac{V}{R} \mid R \geq 1\right), \quad pFNR=E\left(\frac{W}{K-R} \mid J-R \geq 1\right).$$

For a generic metric MTE, let
$$
\Delta_{\alpha, \beta}^{\mathrm{MTE}}(\mathcal{P})=\left\{(T, d): \mathrm{MTE}_{1, \mathcal{A}} \leq \alpha \text { and } \mathrm{MTE}_{2, \mathcal{A}} \leq \beta \text { for all } \mathcal{A} \in \mathcal{P}\right\}.
$$

We now mention a result on asymptotic optimality of genral error metric controlling procedures. 

\begin{theorem}\label{theoremongenerror}
Consider the equicorrelated streams setup with common correlation $\rho >0$. Fix $1 \leq m \leq K-1$ and let $\left(T_{Gap}^{\star}(c), d_{Gap}^{\star}(c)\right)$ denote our gap rule with number of signals $m$ and threshold $c>0$. Let MTE be a multiple testing error metric such that:

(i) there is a constant $C_1$ such that
\begin{equation}\label{geneq1}
\operatorname{MTE}_{i, \mathcal{A}}\left(T_{Gap}^{\star}(c), d_{Gap}^{\star}(c)\right) \leq C_1 \cdot FWER_{i, \mathcal{A}}\left(T_{Gap}^{\star}(c), d_{Gap}^{\star}(c)\right)
\end{equation}
for $i=1$ and 2 , for all $\mathcal{A} \in \mathcal{P}_m$, and for all $c>0$, and

(ii) there is a constant $C_2$ such that
\begin{equation}\label{geneq2}
\operatorname{MTE}_{i, \mathcal{A}}(T, d) \geq C_2 \cdot FWER_{i, \mathcal{A}}(T, d)
\end{equation}
for $i=1$ and 2 , for all $\mathcal{A} \in \mathcal{P}_m$, and for all procedures $(T, d)$.

Given $\alpha, \beta \in(0,1),$ let $\left(T_{Gap}^{\prime}, d_{Gap}^{\prime}\right)$ denote our proposed gap rule with number of signals $m$ and threshold
$$
c=\left|\log \left(\left(\alpha / C_1\right) \wedge\left(\beta / C_1\right)\right)\right|+\log (m(K-m)) .
$$
Then the following hold.

(1) The procedure $\left(T_{Gap}^{\prime}, d_{Gap}^{\prime}\right)$ is valid for MTE control. That is,
\begin{equation}\label{geneq3}
\left(T_{Gap}^{\prime}, d_{Gap}^{\prime}\right) \in \Delta_{\alpha, \beta}^{\mathrm{MTE}}(\mathcal{P}_m) .
\end{equation}

(2) The procedure $\left(T_{Gap}^{\prime}, d_{Gap}^{\prime}\right)$ is asymptotically optimal for MTE control with respect to class $\mathcal{P}_m$. That is, for all $\mathcal{A} \in \mathcal{P}_m$,
\begin{equation}\label{geneq4}
 \mathbb{E}_{\mathcal{A}}\left(T_{Gap}^{\prime}\right) \sim \frac{1-\rho}{\mu^2}\cdot |\log (\alpha \wedge \beta)| \sim \inf _{(T, d) \in \Delta_{\alpha, \beta}^{\mathrm{MTE}}(\mathcal{P}_m)} \mathbb{E}_\mathcal{A}(T)
\end{equation}
as $\alpha, \beta \rightarrow 0$.
\end{theorem}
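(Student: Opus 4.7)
The plan is to prove the two parts in sequence, piggybacking entirely on the machinery already developed for FWER control in Section~\ref{sec:chap6sec4}. Both parts reduce to changing the target error levels by constants and observing that these constants wash out in the logarithmic asymptotics.

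For part (1), I would apply hypothesis (i) directly. With the choice $c = |\log((\alpha/C_1)\wedge(\beta/C_1))| + \log(m(K-m))$, the first theorem of Section~2.3.1 (adapted to our gap rule, whose PICS bound $m(K-m)e^{-c}$ was established in the derivation leading to \eqref{gappics2}) yields
\[
FWER_{i,\mathcal{A}}\left(T_{Gap}^{\prime},d_{Gap}^{\prime}\right) \;\leq\; m(K-m)\,e^{-c} \;=\; \frac{\alpha\wedge\beta}{C_1}.
\]
Combining with \eqref{geneq1} gives $\mathrm{MTE}_{i,\mathcal{A}} \leq C_1 \cdot FWER_{i,\mathcal{A}} \leq \alpha\wedge\beta$, which is below both $\alpha$ (for $i=1$) and $\beta$ (for $i=2$). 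Hence \eqref{geneq3} holds.

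For part (2), I would handle the upper and lower bounds separately. For the upper bound on $\mathbb{E}_{\mathcal{A}}(T_{Gap}^{\prime})$, observe that $T_{Gap}^{\prime}$ is simply our gap rule from Section~\ref{sec:chap6sec4} calibrated to target precision levels $\alpha/C_1$ and $\beta/C_1$. Therefore \autoref{1stoptimality} applies verbatim and gives
\[
\mathbb{E}_{\mathcal{A}}\left(T_{Gap}^{\prime}\right) \;\sim\; \frac{1-\rho}{\mu^2}\cdot\left|\log\bigl((\alpha/C_1)\wedge(\beta/C_1)\bigr)\right| \;\sim\; \frac{1-\rho}{\mu^2}\cdot|\log(\alpha\wedge\beta)|,
\]
where the last equivalence uses that $C_1$ is a fixed constant so $|\log((\alpha\wedge\beta)/C_1)| = |\log(\alpha\wedge\beta)| + \log C_1 = |\log(\alpha\wedge\beta)|(1+o(1))$ as $\alpha,\beta\to 0$. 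For the matching lower bound, I would invoke hypothesis (ii) to establish a class inclusion: every $(T,d)\in\Delta_{\alpha,\beta}^{\mathrm{MTE}}(\mathcal{P}_m)$ satisfies, by \eqref{geneq2},
\[
FWER_{i,\mathcal{A}}(T,d) \;\leq\; \frac{1}{C_2}\,\mathrm{MTE}_{i,\mathcal{A}}(T,d)
\]
so $(T,d)\in\Delta_{\alpha/C_2,\,\beta/C_2}^{FWER}(\mathcal{P}_m)$. Hence
\[
\inf_{(T,d)\in\Delta_{\alpha,\beta}^{\mathrm{MTE}}(\mathcal{P}_m)} \mathbb{E}_{\mathcal{A}}(T) \;\geq\; \inf_{(T,d)\in\Delta_{\alpha/C_2,\,\beta/C_2}^{FWER}(\mathcal{P}_m)} \mathbb{E}_{\mathcal{A}}(T) \;\sim\; \frac{1-\rho}{\mu^2}\cdot|\log(\alpha\wedge\beta)|,
\]
where the last step uses \autoref{1stoptimality} applied at the shifted error levels, together with the same constant-absorption argument as above. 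Sandwiching the two bounds yields \eqref{geneq4}.

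The main obstacle, if any, is conceptual rather than computational: one needs to check that conditions (i) and (ii) are genuinely available for our gap rule in the equicorrelated setting (the condition~(i) is stated only for our procedure, so this is assumed; condition~(ii) is universal), and to verify that the class inclusion in the lower-bound argument preserves the infimum. Once these bookkeeping details are in place, the essence of the proof is the observation that multiplicative rescaling of the target error rates by fixed constants is invisible at the logarithmic resolution at which our asymptotic optimality statements are cast, so the problem reduces entirely to the FWER case already handled in \autoref{1stoptimality}.
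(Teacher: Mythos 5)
Your proposal is correct and follows essentially the same route as the paper: part (1) via the FWER bound induced by the adjusted threshold combined with condition (i), and part (2) by sandwiching the gap rule's asymptotic expected sample size at the rescaled levels against the lower bound obtained from the class inclusion $\Delta_{\alpha,\beta}^{\mathrm{MTE}}(\mathcal{P}_m)\subseteq\Delta_{\alpha/C_2,\beta/C_2}^{FWER}(\mathcal{P}_m)$ given by condition (ii). Your additional remark that the constants $C_1, C_2$ are absorbed at logarithmic resolution is exactly the observation the paper relies on implicitly.
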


\begin{proof}
For the first part, we fix arbitrary $\mathcal{A} \in \mathcal{P}_m$. We have
$$
\operatorname{FWER}_{1, A}\left(T_{Gap}^{\prime}, d_{Gap}^{\prime}\right) \leq \alpha / C_1 \text { and } \operatorname{FWER}_{2, A}\left(T_{Gap}^{\prime}, d_{Gap}^{\prime}\right) \leq \beta / C_1 \text {. }
$$
Applying \eqref{geneq1} yields
$$
\operatorname{MTE}_{1, A}\left(T_{Gap}^{\prime}, d_{Gap}^{\prime}\right)\leq \alpha \quad \text{and} \quad \operatorname{MTE}_{2, A}\left(T_{Gap}^{\prime}, d_{Gap}^{\prime}\right) \leq \beta
$$
Hence \eqref{geneq3} is established. For the second part, again we fix arbitrary $\mathcal{A} \in \mathcal{P}_m$ and consider $\alpha, \beta \rightarrow 0$. We have, as $\alpha, \beta \rightarrow 0$, 
\begin{equation}\label{geneq5}
\mathbb{E}_{\mathcal{A}}\left(T_{Gap}^{\prime}\right) \sim \frac{1-\rho}{\mu^2}\cdot |\log (\alpha \wedge \beta)|.
\end{equation}
It remains to show that the r.h.s in the above equation is also a lower bound for any procedure in $\Delta_{\alpha, \beta}^{\mathrm{MTE}}(\mathcal{P}_m)$. \eqref{geneq2} gives
$$
\Delta_{\alpha, \beta}^{\mathrm{MTE}}(\mathcal{P}_m) \subseteq \Delta_{\alpha/C_2, \beta/C_2}^{\mathrm{FWER}}(\mathcal{P}_m).
$$
This implies
$$
\inf _{(T, d) \in \Delta_{\alpha, \beta}^{\mathrm{MTE}}(\mathcal{P}_m)} \mathbb{E}_{\mathcal{A}}(T) \geq \inf _{(T, d) \in \Delta_{\alpha/C_2, \beta/C_2}^{\mathrm{FWER}}(\mathcal{P}_m)} \mathbb{E}_{\mathcal{A}}(T).
$$
The latter is of the order $\frac{1-\rho}{\mu^2}\cdot |\log (\alpha \wedge \beta)|$. This, combined with \eqref{geneq5} gives the desired result. 
\end{proof}

\begin{remark}
$(FDR, FNR)$ and $(pFDR, pFNR)$ satisfy the conditions mentioned 
in \autoref{theoremongenerror}. This is because of the following inequalities \citep{heBartroff2021}:
    $$\frac{1}{K} \cdot FWER_1\leq FDR \leq FWER_1, \quad \frac{1}{K} \cdot FWER_2 \leq FNR \leq FWER_2,$$
$$\frac{1}{K} \cdot FWER_1\leq pFDR, \quad \frac{1}{K} \cdot FWER_2 \leq pFNR \leq FWER_2,$$
$$pFDR(T_{Gap}^{\star}) \leq FWER_1(T_{Gap}^{\star}), \quad pFNR(T_{Gap}^{\star}) \leq FWER_2(T_{Gap}^{\star}).$$
\end{remark}

 We also have the following similar asymptotic optimality result for general error metrics in the case when lower and upper bounds for the number of signals is available:

 \begin{theorem}\label{theoremongenerror2}
Consider the equicorrelated streams setup with common correlation $\rho >0$. Fix integers $0 < \ell<u < K$ and let $T_{Gap}^{\star \star}(e)$ denote the gap-intersection rule with strict bounds $\ell$, $u$ on the number of signals and threshold $e$. Let MTE be multiple testing error metric such that:

(i) there is a constant $C_1$ such that
\begin{equation}\label{geneq6}
\operatorname{MTE}_{i, \mathcal{A}}\left(T_{Gap}^{\star \star}(e), d_{Gap}^{\star \star }(e)\right) \leq C_1 \cdot FWER_{i, \mathcal{A}}\left(T_{Gap}^{\star}(e), d_{Gap}^{\star \star}(e)\right)
\end{equation}
for $i=1$ and 2 , for all $\mathcal{A} \in \mathcal{P}_{\ell, u}$, and for all $e>0$, and

(ii) there is a constant $C_2$ such that
\begin{equation}\label{geneq7}
\operatorname{MTE}_{i, \mathcal{A}}(T, d) \geq C_2 \cdot FWER_{i, \mathcal{A}}(T, d)
\end{equation}
for $i=1$ and 2 , for all $\mathcal{A} \in \mathcal{P}_{\ell, u}$, and for all procedures $(T, d)$.

Given $\alpha, \beta \in(0,1),$ let $\left(T_{Gap}^{\prime \prime}, d_{Gap}^{\prime}\right)$ denote our proposed gap rule with bounds $\ell, u$ on the number of signals and threshold
$$e = \frac{1-\rho}{\mu}\cdot \max \left \{ |\log \left(\frac{\alpha/ C_1}{2(K-l)(K-l-1)}\right) |, |\log \left(\frac{\beta/C_1}{2u(u-1)}\right) |\right\} + n\mu/2.$$
Then the following hold.

(1) The procedure $\left(T_{Gap}^{\prime}, d_{Gap}^{\prime}\right)$ is valid for MTE control. That is,
\begin{equation}\label{geneq8}
\left(T_{Gap}^{\prime \prime}, d_{Gap}^{\prime \prime }\right) \in \Delta_{\alpha, \beta}^{\mathrm{MTE}}(\mathcal{P}_{\ell, u}) .
\end{equation}

(2) The procedure $\left(T_{Gap}^{\prime \prime}, d_{Gap}^{\prime \prime }\right)$ is asymptotically optimal for MTE control with respect to class $\mathcal{P}_{\ell, u}$. That is, for all $\mathcal{A} \in \mathcal{P}_{\ell, u}$,
\begin{equation}\label{geneq9}
 \mathbb{E}_{\mathcal{A}}\left(T_{Gap}^{\prime \prime}\right) \sim \frac{2(1-\rho)}{\mu^2}\cdot |\log (\alpha \wedge \beta)| \sim \inf _{(T, d) \in \Delta_{\alpha, \beta}^{\mathrm{MTE}}(\mathcal{P}_{\ell, u})} \mathbb{E}_\mathcal{A}(T)
\end{equation}
as $\alpha, \beta \rightarrow 0$.
\end{theorem}
The proof is exactly similar to the preceding and hence omitted. 

\section{The case with unknown mean and equicorrelation\label{sec:chap6sec6}}
Throughout this work, we have assumed that both the mean (i.e, the signal strength) and the common correlation are known. In this section, we extend our results to the case when $\mu$ and $\rho$ are unknown. For technical simplicity, we consider the known $m$ case with common positive mean $\mu$. 

\noindent We now define some random variables to construct consistent estimators of $\mu$ and $\rho$. Towards this, For any $j \in \mathbb{N}$, let
$$A_j := \frac{1}{m} \sum_{i=1}^{K}  X_{ij}, \quad B_j := \frac{1}{K(K-1)} \sum_{i\neq i^{\prime}} X_{ij}X_{i^{\prime}j}.$$
$A_{j}$'s are i.i.d random variables with mean $\mu$ and finite variance. This implies $\hat{\mu}_n:= \frac{1}{n} \sum_{j=1}^n A_j$ is a consistent estimator of $\mu$. Also, the strong law of large numbers (applied on the random variables $A_j$) gives that $\hat{\mu}_n$ converges to $\mu$ almost surely.

% \noindent For any $j \in \mathbb{N}$, let
% $$B_j := \frac{1}{K(K-1)} \sum_{i\neq i^{\prime}} X_{ij}X_{i^{\prime}j}.$$
Similarly, $B_{j}$'s are i.i.d random variables with mean $\mathbb{E}(B_j) = \rho + \frac{m(m-1)}{K(K-1)}\mu^2$ and finite variance. 
% Also, 
% \begin{align*}
%     \operatorname{Var}(B_j) = \frac{1}{K^2(K-1)^2} \operatorname{Var}\left(\sum_{i\neq i^{\prime}} X_{ij}X_{i^{\prime}j}\right)& \leq \frac{1}{K(K-1)} \sum_{i\neq i^{\prime}}  \operatorname{Var}\left(X_{ij}X_{i^{\prime}j}\right) \\
%     &\leq \frac{1}{K(K-1)} \sum_{i\neq i^{\prime}}  \mathbb{E}\left(X_{ij}^2 X_{i^{\prime}j}^2\right)\\
%     &\leq \frac{1}{K(K-1)} \sum_{i\neq i^{\prime}}  \bigg[\frac{\mathbb{E}(X_{ij}^2) + \mathbb{E}(X_{i^{\prime}j}^2)}{2}\bigg]^2 \\
%     & \leq 1.
% \end{align*}
Hence, the random variable $\frac{1}{n}\sum_{j=1}^{n} B_j$ is a consistent estimator for $\rho + \frac{m(m-1)}{K(K-1)}\mu^2$. Using the consistency of $\hat{\mu}_n$, $\frac{1}{n}\sum_{j=1}^{n} B_j$ and applying the continuous mapping theorem, we obtain that the estimator 
$$\hat{\rho}_n:= \frac{1}{n}\sum_{j=1}^{n} B_j - \frac{m(m-1)}{K(K-1)} {\hat{\mu}_n}^2$$
is consistent for $\rho$. We recall the definition of our proposed stopping time when $\mu$ and $\rho$ are known:
$$\begin{aligned}
T_{Gap}^{\star}(m,c) := & \inf \left\{n \geq 1:  R_{(m),n} - R
_{(m+1),n} \geq \frac{1-\rho}{\mu} \cdot c\right\}
\end{aligned}$$
where $c:=|\log (\alpha \wedge \beta)|+\log (m(K-m))$. Mimicking this rule, we define a stopping time for the unknown mean and unknown correlation case: 
$$\begin{aligned}
\widehat{T}_{Gap}^{\star}(m,c) := & \inf \left\{n \geq M:  R_{(m),n} - R
_{(m+1),n} \geq \frac{1-\hat{\rho}_n}{\hat{\mu}_n} \cdot c\right\}
\end{aligned}$$
where $M$ is some fixed large positive integer. 

Fix $\epsilon>0$. Define two random variables:
$$\begin{aligned}
W_1 := & \inf \left\{n \geq 1:  R_{(m),n} - R
_{(m+1),n} \geq \bigg[\frac{1-\rho}{\mu}-\epsilon\bigg] \cdot c\right\}, \\
W_2 := & \inf \left\{n \geq 1:  R_{(m),n} - R
_{(m+1),n} \geq \bigg[\frac{1-\rho}{\mu}+\epsilon\bigg] \cdot c\right\}.
\end{aligned}$$
The function $f$ defined as $f(x,y)=\frac{1-y}{x}$ is continuous in $\mathbb{R}^{+}\times(0,1)$. The continuous mapping theorem implies that $\frac{1-\hat{\rho}_n}{\hat{\mu}_n}$ converges to $\frac{1-\rho}{\mu}$ almost surely. This gives 
\begin{align*}
    & W_1 \leq \widehat{T}_{Gap}^{\star}(m,c) \leq W_2 \quad \text{almost surely} \\
    \implies & \mathbb{E}[W_1] \leq \mathbb{E}[\widehat{T}_{Gap}^{\star}(m,c) ]\leq \mathbb{E}[W_2].& 
\end{align*}
Using \autoref{1stoptimality} we obtain
\begin{align*}
    & \frac{\frac{1-\rho}{\mu}-\epsilon}{\mu} \leq \lim_{\alpha, \beta \to 0} \frac{\mathbb{E}[\widehat{T}_{Gap}^{\star}(m,c) ]}{|\log(\alpha \wedge \beta)|}\leq 
    \frac{\frac{1-\rho}{\mu}+\epsilon}{\mu}.
\end{align*}
Since the above inequality holds for arbitrary $\epsilon >0$, we have
\begin{equation}\lim_{\alpha, \beta \to 0} \frac{\mathbb{E}[\widehat{T}_{Gap}^{\star}(m,c) ]}{|\log(\alpha \wedge \beta)|} = \frac{1-\rho}{\mu^2}.\label{expression}\end{equation}
Thus, in the known $m$ case with common positive mean $\mu$, we obtain 
$$\lim_{\alpha, \beta \to 0} \frac{\mathbb{E}[\widehat{T}_{Gap}^{\star}(m,c) ]}{\mathbb{E}[T_{Gap}^{\star}(m,c) ]} = 1,$$ supporting the asymptotic optimality of $\widehat{T}_{Gap}^{\star}(m,c)$. However, from the definition of $\widehat{T}_{Gap}^{\star}(m,c)$ and \autoref{1stoptimality}, it is clear that 
$$\lim_{\alpha, \beta \to 0} \frac{\mathbb{E}[\widehat{T}_{Gap}^{\star}(m,c) ]}{|\log(\hat{\alpha} \wedge \hat{\beta})|} = \frac{1-\rho}{\mu^2}$$
where $\hat{\alpha}, \hat{\beta}$ are the levels at which $FWER_{I}$ and $FWER_{II}$ are respectively controlled by $\widehat{T}_{Gap}^{\star}(m,c)$. Comparing with \eqref{expression}, we get
$$\lim_{\alpha, \beta \to 0} \frac{|\log(\hat{\alpha} \wedge \hat{\beta})|}{|\log(\alpha \wedge \beta)|} = 1.$$We summarize the findings of this section in the following result:
\begin{theorem}
 Consider the known $m$ case with unknown signal strength $\mu$ and correlation $\rho$. Let $\hat{\mu}_n, \hat{\rho}_n, \widehat{T}_{Gap}^{\star}(m,c)$ be as defined as in this section. We have the following:
 \begin{enumerate}
     \item Suppose $\hat{\alpha}, \hat{\beta}$ are the levels at which $FWER_{I}$ and $FWER_{II}$ are respectively controlled by $\widehat{T}_{Gap}^{\star}(m,c)$. Then, 
     $$\lim_{\alpha, \beta \to 0} \frac{|\log(\hat{\alpha} \wedge \hat{\beta})|}{|\log(\alpha \wedge \beta)|} = 1.$$
     \item $$\lim_{\alpha, \beta \to 0} \frac{\mathbb{E}[\widehat{T}_{Gap}^{\star}(m,c) ]}{\mathbb{E}[T_{Gap}^{\star}(m,c) ]} = 1.$$
 \end{enumerate}
\end{theorem}
\section{Simulation Study\label{sec:chap6sec7}}

In this section, we empirically elucidate the performance of our proposed gap rule. We consider the known $m$ case. The set-up is the following:
\begin{enumerate}[label=(\roman*)]
    \item Number of streams: $K=20$.
    \item Number of streams in which signal is present: $m=3$.
    \item Means under the alternative hypotheses: $\mu_{i}=.5$ for each $i \in \mathcal{A}$.
    \item Common correlation among the streams: we consider three values of $\rho$, namely .1, .5 and .9. 
    \item Desired level of familywise error control: we take $\alpha=\beta$ for simplicity. We have considered eight values of $\alpha$, for which $-\log_{10}(\alpha) \in 
    \{2,4,6,8,10,12,14,16\}.$ This means $\alpha$ ranges from $.01$ to $10^{-16}$.
\end{enumerate}

\noindent \autoref{fig:1} portrays the estimates of the average sample numbers (ASN) for $\rho =.1, .5, .9$ (in solid, dashed and dotted lines respectively) under different values of $\alpha$. We see that the ASNs decrease with $\rho$. Also, the ASNs are nearly linear in $-\log_{10}(\alpha)$, as also seen in \autoref{1stoptimality}. 

\begin{figure}[ht]
     \centering
     \includegraphics[width=10cm]{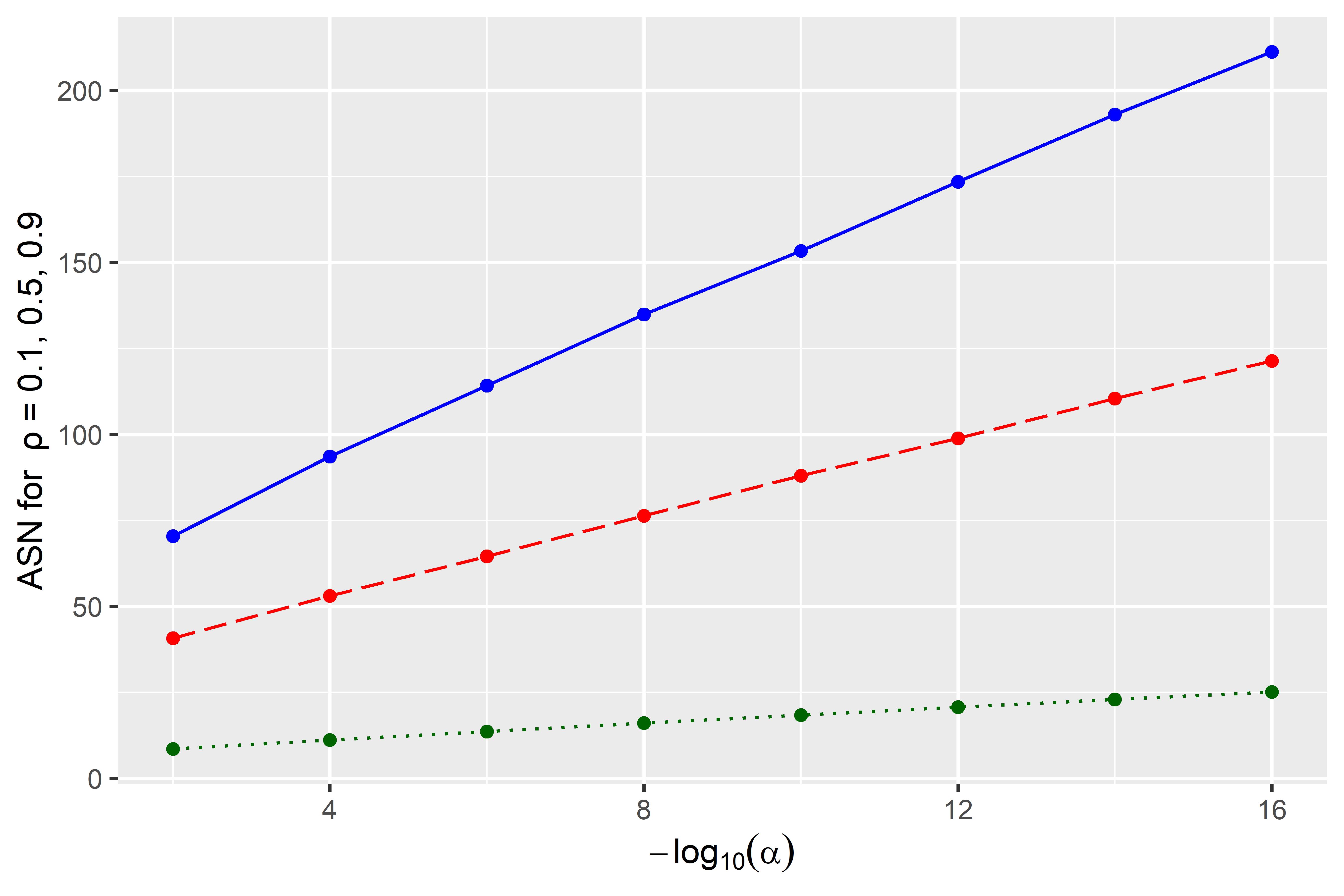}
  \caption{Estimates of average sample numbers of our proposed gap rule for different combinations of $(\rho, \alpha)$. Based on 1000 repetitions.}
  \label{fig:1}
\end{figure}  

\begin{figure}[!ht]
     \centering
     \includegraphics[width=10cm]{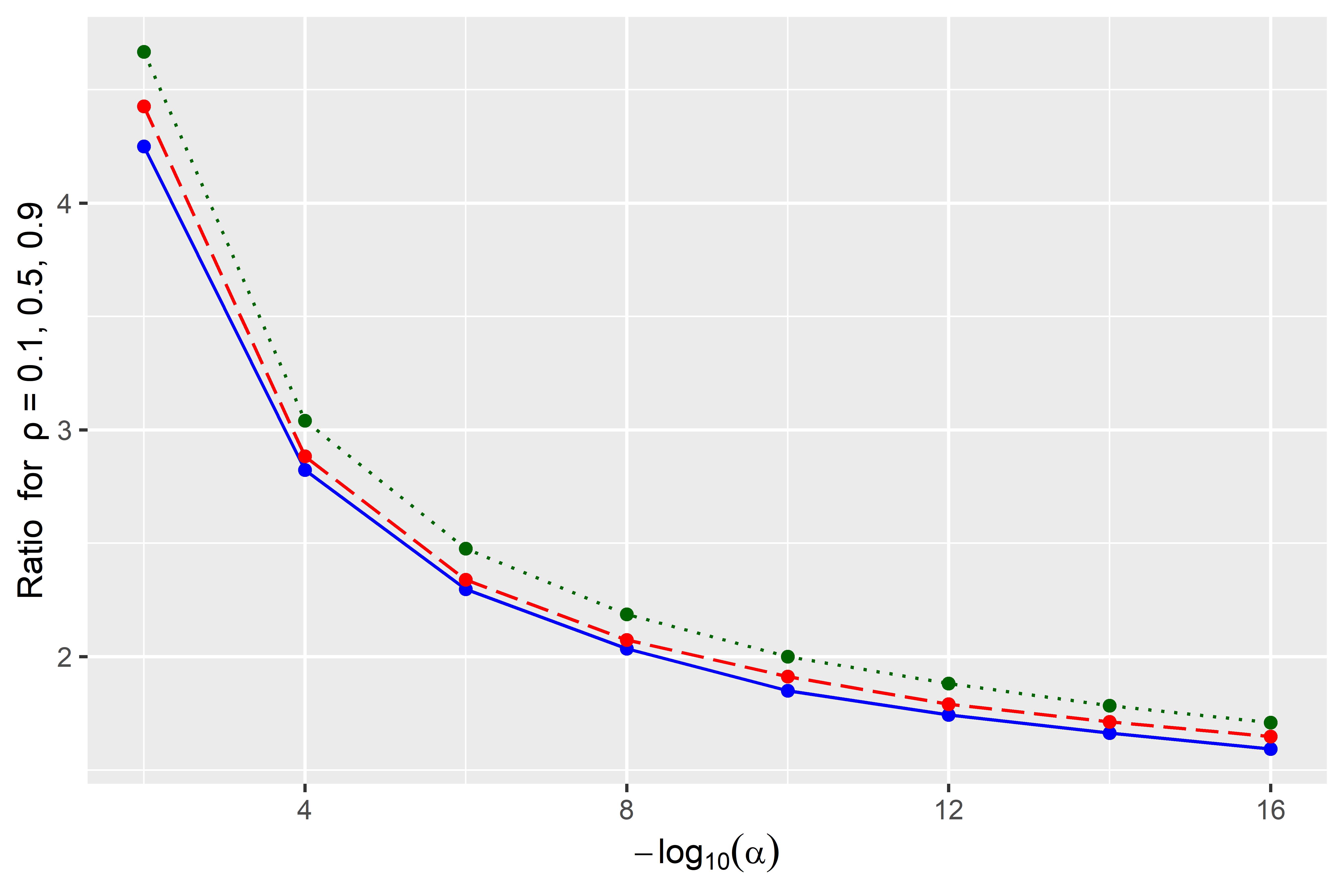}
  \caption{Ratios of the ASNs and the theoretical approximations of our proposed gap rule for different combinations of $(\rho, \alpha)$. Based on 1000 repetitions.}
  \label{fig:2}
\end{figure}

\autoref{fig:2} presents the ratios of the ASNs and the corresponding theoretical approximations (provided by \autoref{1stoptimality}) for $\rho =.1, .5, .9$ (in solid, dashed and dotted lines respectively) under different values of $\alpha$. Since the theoretical approximation is a lower bound to the ASN, the ratio is always greater than 1. We see that the ratio decreases to $1$ as $\alpha \to 0$. Also, the convergence to $1$ is slower for higher values of $\rho$.

\section{Concluding Remarks\label{sec:chap6sec8}}
Our results (e.g., Theorem \ref{1stoptimality} and Theorem \ref{2ndoptimality}) elucidate that the asymptotically optimal average sample numbers are decreasing in the common correlation $\rho$. A high value of $\rho$ implies a strong correlation among the streams, and so one might expect that it should require fewer observations on average to detect the signals. Our results illustrate this remarkable blessing of dependence. This role of correlation is in stark contrast to its role in the frequentist paradigm. Several popular and widely used procedures fail to hold the FWER at a positive level asymptotically under positively correlated Gaussian frameworks \citep{deyspl, deystpa}.  Thus, our results illuminate that dependence might be a blessing or a curse, subject to the type of dependence or the underlying paradigm.

\cite{FDR2007} remark that false discoveries are challenging to tackle in models with complex dependence structures, e.g., arbitrarily correlated Gaussian models. It would be interesting to explore if there are connections between the SPRT and the optimal sequential test rules under general dependencies. Throughout this work, we have considered multivariate Gaussian setup, frequent in various areas of stochastic modeling \citep{Hutchinson, olkin_viana_1995, Monhor_2011}. However, one interesting extension would be to study the signal detection problem under general distributions and to see whether similar connections with SPRT exist in those cases too.

\section*{Acknowledgements}

Dey sincerely acknowledges Prof. Thorsten Dickhaus for his constant support and encouragement throughout this work. 

\section*{Disclosure Statement}

The authors report there are no competing interests to declare.

% \section*{\texorpdfstring{{Data and
% Code}}{Data and Code}}

% {\label{182566}}

% {}{If data, scripts or other artefacts used to generate the analyses
% presented in the article are available via a publicly available data
% repository, please include a reference to the location of the material
% within the article.}

% \par\null

\selectlanguage{english}
% \FloatBarrier
\bibliography{references}

\end{document}